\theoremstyle{plain}
\newtheorem{theorem}{Theorem}[section]
\newtheorem{proposition}[theorem]{Proposition}
\newtheorem{corollary}[theorem]{Corollary}
\theoremstyle{definition}
\theoremstyle{remark}
\newtheorem{remark}[theorem]{Remark}
\newcommand{\Spin}{\mathrm{Spin}}
\providecommand{\keywords}[1]
{	{ \small
  \textbf{\textit{Keywords---}} #1
}
}
\begin{document}

\title{Higher homotopy groups of topological Kac--Moody groups, Whitehead tower, and string groups}

\author{Ralf K\"ohl}

\maketitle

\begin{abstract}
\noindent
This note establishes that homotopy groups of topological split real Kac--Moody groups are countable and, hence, concludes the existence of Whitehead towers consisting of topological groups for these groups and their maximal compact subgroups. Moreover, this note proposes a construction for string groups for the $E_n$-series.
\end{abstract}

\keywords{Topological Kac--Moody groups, Whitehead tower, string group}

\section{Introduction}

Using the fiber bundle 
\begin{eqnarray*}
\mathrm{O}(n-1) \to \mathrm{O}(n) \to \mathrm{O}(n)/\mathrm{O}(n-1) \cong \mathbb{S}^{n-1} \label{classicalO}
\end{eqnarray*}
one observes that the homotopy groups $\pi_k(\mathrm{O}(n))$ are independent of $n$ whenever $n>k+1$. For the resulting {\em stable orthogonal group} $$\mathrm{O}(1) \to \mathrm{O}(2) \to \mathrm{O}(3) \to \mathrm{O}(4) \to \mathrm{O}(5) \to \cdots \to \mathrm{O} := \bigcup_{n \geq 1} \mathrm{O}(n)$$ the Bott periodicity theorem \cite{Bott:1959} states 
$$\pi_k(\mathrm{O}) = \left\{ \begin{array}{cc}  C_2 & \text{for $k=0,1 \mod 8$}, \\ \mathbb{Z} & \text{for $k=3,7 \mod 8$}, \\ \{ 1 \} & \text{else}.  \end{array} \right.  $$
Specializing to $\mathrm{O}(16)$, the first difference is in homotopy dimension $15$; indeed $\pi_{15}(\mathrm{O}(16)) = \mathbb{Z} \oplus \mathbb{Z} \neq \mathbb{Z} = \pi_{15}(\mathrm{O})$; the {\em Whitehead tower} for $\mathrm{O}(16)$ consists of homomorphisms of topological groups where in each step the lowest non-trivial homotopy is killed $$ \cdots \to \mathrm{String}(16) \to \mathrm{Spin}(16) \to \mathrm{SO}(16) \to \mathrm{O}(16)$$ and $\mathrm{String}(16)$ can be modelled as a topological group via the construction used for proving \cite[Theorem~5.1]{Stolz:1996}; or, alternatively, via \cite[Theorem 1.2]{BaezStevensonCransSchreiber:2007}; or \cite[Section~5]{Stolz/Teichner:2004}. 

On a more abstract level, any Lie group admits a Whitehead tower consisting of topological groups, by \cite[Theorem~2.13]{Rovelli:2019}, since their homotopy groups are countable, cf. \cite{Milnor:1959} for the fact and \cite[Proposition~1.14]{Rovelli:2019} for its relevance.

\medskip
The group $\mathrm{SO}(n)$ is the maximal compact subgroup of the semisimple split real Lie group $\mathrm{SL}(n,\mathbb{R})$, being the fixed-point group of a Cartan involution. Given an algebraically simply-connected semisimple split real Kac--Moody group $G$ of simply-laced type, as in \cite[Definition~1.1]{Harring/Koehl:2023}, the purpose of this note is to study the higher homotopy groups of the fixed-point subgroup $K$ of the Cartan--Chevalley involution of $G$, as in \cite[Notation~2.2]{Harring/Koehl:2023}. Moreover, this note will point out that $K$ also admits a Whitehead tower, and in fact one in which the spin cover of $K$ constructed in \cite{Ghatei/Horn/Koehl/Weiss:2017} fits. Finally, this note proposes a string group for $K$ inspired by \cite[Theorem~5.1]{Stolz:1996}, that also fits into the Whitehead tower.  

 In \cite{Hartnick/Koehl/Mars:2012} the authors constructed a topology $\tau$ on the Kac--Moody group $G$, the {\em Kac--Petersen topology}, such that the following hold:
\begin{enumerate}
\item[(i)] $\tau$ is Hausdorff and $k_\omega$ (\cite[Proposition~7.10]{Hartnick/Koehl/Mars:2012}), in particular paracompact (\cite[(6), p.\ 114]{Franklin/Thomas:1977})
\item[(ii)] $\tau$ is a group topology (\cite[Proposition~7.10]{Hartnick/Koehl/Mars:2012}),
\item[(iii)] $\tau$ induces the Lie group topology on any Levi factor of a spherical parabolic subgroup (\cite[Corollary~7.16(iv)]{Hartnick/Koehl/Mars:2012}, \cite[Proposition~3.4.8]{Harring:2020}),
\item[(iv)] $\tau$ is the finest topology satisfying these properties (\cite[Proposition~7.21]{Hartnick/Koehl/Mars:2012}).
\end{enumerate} 

By \cite[Proposition~4.9]{Harring/Koehl:2023}, in case the Kac--Moody group $G$ is of simply-laced type distinct from $A_1$, the group $(K,\tau_{|K})$ (and hence the group $G$ by \cite[Theorem~A.15]{Harring/Koehl:2023}) admits a topologically simply connected two-fold spin cover.

As mentioned above, the purpose of this note is to determine the higher homtopy groups of $K$ and of $G$.

\bigskip \noindent {\bf Acknowledgement.} The author expresses his gratitude to Karl-Hermann Neeb for a discussion concerning fibrations.

\section{Homogeneous spaces of $k_\omega$-Kac--Moody groups}

The key point of this section is that $k_\omega$-spaces are paracompact (\cite[(6), p.\ 114]{Franklin/Thomas:1977}) making the Huebsch--Hurewicz theorem \cite{Huebsch:1955} applicable.

\begin{proposition} \label{hurewicz}
Let $G$ be a split real Kac--Moody group endowed with its $k_\omega$-group topology and let $H$ be a Zariski-closed subgroup of $G$. Then $H \to G \to G/H$ is a Serre fibration and, in fact, a Hurewicz fibration.
\end{proposition}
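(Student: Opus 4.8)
The plan is to show that the coset projection $\pi\colon G\to G/H$ is a locally trivial bundle with fibre $H$ and then to pass from the homotopy lifting property on trivialising charts to all of $G/H$ by means of the Huebsch--Hurewicz theorem \cite{Huebsch:1955}; since every Hurewicz fibration is a Serre fibration, this proves both assertions simultaneously. The argument thus decomposes into three tasks: paracompactness of the base $G/H$, local triviality of $\pi$, and the globalisation. The emphasis of \cite[(6), p.\ 114]{Franklin/Thomas:1977} indicates that the first task supplies exactly the hypothesis that the Huebsch--Hurewicz theorem needs; the second task is where, in my view, the genuine geometric content lies.

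First I would settle paracompactness. As $H$ is Zariski-closed and the Kac--Petersen topology $\tau$ refines the Zariski topology, $H$ is $\tau$-closed, so $G/H$ is Hausdorff and $\pi$ is a continuous open surjection. Writing the $k_\omega$-structure of $G$ as a colimit $G=\varinjlim G_n$ over an ascending sequence of compact sets, the images $\pi(G_n)$ are compact, exhaust $G/H$, and determine its topology since $\pi$ is a quotient map; thus a Hausdorff quotient of a $k_\omega$-space is again $k_\omega$, and $G/H$ is paracompact by \cite[(6), p.\ 114]{Franklin/Thomas:1977}.

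Next comes local triviality. It suffices to produce a continuous local section $s\colon U\to G$ of $\pi$ on an open neighbourhood $U$ of the base coset $eH$: the assignment $g\mapsto(\pi(g),s(\pi(g))^{-1}g)$ is then a homeomorphism $\pi^{-1}(U)\xrightarrow{\ \cong\ }U\times H$ with continuous inverse $(u,h)\mapsto s(u)h$, and left translation by any $g_0\in G$ transports this chart to a neighbourhood of $g_0H$, so that local sections at the identity coset already endow $\pi$ with the structure of a locally trivial $H$-bundle. To build $s$ I would invoke property (iii) above: near $eH$ only finitely many root directions are relevant, and on the corresponding Levi factors of spherical parabolic subgroups the topology $\tau$ is the ordinary Lie topology, where a slice transverse to $H$ exists by finite-dimensional Lie theory; the task is to assemble these Lie-theoretic slices into a genuine $\tau$-continuous transversal compatible with the ind-structure of $G$ and with the Zariski-closedness of $H$.

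This assembly is the main obstacle. In contrast to a finite-dimensional Lie group, $G$ carries no global manifold structure, so one cannot simply exponentiate a complement of the Lie algebra of $H$; the delicate point is to verify that the transversal constructed at finite level remains a homeomorphism onto a $\tau$-open neighbourhood of $eH$ after passage to the colimit topology, yielding a continuous and not merely set-theoretic section. Granting local triviality, the globalisation is immediate: the trivialising charts form an open, hence numerable, cover of the paracompact base over which $\pi$ restricts to a (trivial) fibration, and the Huebsch--Hurewicz theorem \cite{Huebsch:1955} glues the associated local lifting functions into a global one, showing that $\pi$ is a Hurewicz fibration and therefore a Serre fibration.
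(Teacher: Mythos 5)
Your proof and the paper's agree only at the two ends: the verification that $G/H$ is Hausdorff, $k_\omega$ and hence paracompact (which you do essentially as the paper does), and the final appeal to the Huebsch--Hurewicz theorem \cite{Huebsch:1955}. The middle of your argument, however, contains a genuine gap, and you in effect concede it yourself: you reduce everything to the existence of a continuous local section $s \colon U \to G$ of $q$ near the identity coset, call its construction ``the main obstacle,'' and then conclude while \emph{granting} local triviality. That step cannot be granted. For topological groups outside the locally compact or Lie setting, quotient maps $G \to G/H$ by closed subgroups in general admit no local sections and need not even be fibrations, so the existence of $s$ is not a routine verification but precisely the kind of statement the proposition is about. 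In the Kac--Peterson topology $G$ is $k_\omega$ but neither locally compact nor metrizable and carries no manifold structure; property (iii) hands you Lie-theoretic charts only on finite-dimensional Levi factors of \emph{spherical parabolic} subgroups, and no argument is given (nor is one readily available) that these slices assemble into a $\tau$-continuous transversal to an \emph{arbitrary} Zariski-closed subgroup $H$. So as written, the proof establishes nothing beyond the paracompactness of the base.

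The paper's proof is structured specifically to avoid the local-triviality question you could not resolve. It fixes a $k_\omega$-decomposition $G = \bigcup_n G_n$, observes that $G/H = \bigcup_n q(G_n)$ is a $k_\omega$-decomposition of the base (\cite[(11), p.~116]{Franklin/Thomas:1977}), and applies the Steenrod--Hurewicz covering homotopy theorem \cite[Theorem~11.3]{Steenrod:1951} to the compact pieces $G_n \to q(G_n)$ to obtain the homotopy lifting property piecewise; the Serre property then follows because any homotopy of a disk in $G/H$ has compact image and hence lands in some $q(G_n)$. Only after that does it use paracompactness (from the $k_\omega$ property) together with \cite{Huebsch:1955} to upgrade from Serre to Hurewicz --- the same globalisation tool you invoke, but fed with compact-piecewise lifting data rather than with local trivialisations. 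If you wish to salvage your route, you would need to actually construct the local section, e.g.\ by big-cell/Birkhoff-decomposition methods; that is a substantive result in its own right, and it is not available for an arbitrary Zariski-closed $H$, which is exactly the generality the proposition demands.
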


\begin{proof}
Let $G = \bigcup_n G_n$ be a $k_\omega$-decomposition and let $q: G \to G/H$ be the quotient map. Then $G/H = \bigcup_n q(G_n)$ is a $k_\omega$-decomposition by \cite[(11), p.\ 116]{Franklin/Thomas:1977} and, in particular, the Hausdorff space $G/H$ is $k_\omega$ as well. The spaces $q(G_n)$ being compact for each $n$, by the Steenrod--Hurewicz theorem \cite[Theorem~11.3]{Steenrod:1951} the maps $G_n \to q(G_n)$ are Hurewicz fibrations and, thus, have the homotopy lifting property. Homotopies of disks (within $G/H$) are compact, hence contained in one of the $q(G_n)$ and so lift to $G_n$. Consequently, $H \to G \to G/H$ is a Serre fibration.

For the stronger claim note that $k_\omega$-spaces are paracompact by  \cite[(6), p.\ 114]{Franklin/Thomas:1977}. Therefore the Huebsch--Hurewicz theorem \cite{Huebsch:1955} implies that $q : G \to G/H$ is a Hurewicz fibration.
\end{proof}

A recurring theme in this note will be fibrations of the form $$K_J \to K \to K/K_J$$ where $K$ is the (closed) subgroup consisting of the fixed-points of the (continuous) Cartan--Chevalley involution of an algebraically simply connected split real Kac--Moody group $G$ of irreducible simply-laced type distinct from $A_1$ endowed with the subspace topology induced by the Kac--Peterson topology and $K_J := K \cap P_J$ for a $J$-parabolic subgroup $P_J$ of $G$.

Note that $$K/K_J \to G/P_J$$ is the universal covering map of the generalized flag manifold $G/P_J$ by \cite[Lemma 4.8]{Harring/Koehl:2023}. The topological structure of Kac--Moody generalized flag manifolds can be described as follows:

\begin{proposition}[{\cite[Proposition~3.7]{Harring/Koehl:2023}}] \label{flagmanifold}
Let $G$ be a split real Kac--Moody group of symmetrizable type. For each subset $J$ of the set of fundamental roots, the Bruhat decomposition $$G/P_J = \bigsqcup_{w \in W^J} BwP_J/P_J$$ is a CW decomposition, where $P_J$ is a parabolic subgroup of type $J$, the set $W^J$ equals the set of minimal coset representatives of $W/W_J$, and the dimension of each cell is given by the word length of $w \in W^J$ with respect to the set $S$ of fundamental generators of $W$.  
\end{proposition}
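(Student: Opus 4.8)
The plan is to realise the Bruhat strata as the open cells of a CW structure, to produce characteristic maps from (real) Bott--Samelson resolutions, and finally to check that the $k_\omega$ topology coincides with the resulting weak topology. I would work directly with $G/P_J$ and the minimal coset representatives $W^J$; the case $J=\emptyset$ is the motivating one, and the general case is formally identical. The first step is to identify the open cells. For $w\in W^J$ write $C_w:=BwP_J/P_J$ and let $U_w:=U\cap wU^-w^{-1}$ be the product of the $\ell(w)$ real root groups $U_\alpha$ indexed by the inversion set of $w$. Since $G$ is split real, each $U_\alpha\cong(\mathbb{R},+)$, and the orbit map $U_w\to C_w$, $u\mapsto uwP_J$, is a continuous bijection; invoking property (iii) of the Kac--Petersen topology together with the structure theory of \cite{Hartnick/Koehl/Mars:2012, Harring:2020} one upgrades it to a homeomorphism, so $C_w\cong\mathbb{R}^{\ell(w)}$. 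This already pins down the dimension of each cell as $\ell(w)$ and is the main algebraic input.

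The heart of the argument is to build the Schubert varieties, the skeletal filtration, and the characteristic maps simultaneously. For a reduced word $\underline w=(s_{i_1},\dots,s_{i_\ell})$ of $w$ I would form the balanced product $Z_{\underline w}=P_{i_1}\times^B\cdots\times^B P_{i_\ell}/B$. In the split real case each fibre $P_{i_j}/B\cong\mathbb{S}^1$, so $Z_{\underline w}$ is a compact $\ell$-manifold carrying its own CW structure with a dense top cell $\cong\mathbb{R}^\ell$, and group multiplication descends to a continuous map $m_{\underline w}:Z_{\underline w}\to G/P_J$. The key verifications are that $m_{\underline w}$ has image $X_w:=\bigsqcup_{w'\in W^J,\,w'\le w}C_{w'}$ (from $P_{i_1}\cdots P_{i_\ell}=\bigcup_{v\le w}BvB$ and the projection of Bruhat order onto $W^J$), that it restricts to a homeomorphism from the top cell onto $C_w$, and that it sends the remainder of $Z_{\underline w}$ into $\bigsqcup_{w'<w}C_{w'}$. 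Compactness of $Z_{\underline w}$ then yields at once that $X_w$ is compact, that $\overline{C_w}=X_w$ with $\ell(w')<\ell(w)$ whenever $w'<w$, and that composing the characteristic map $D^{\ell}\to Z_{\underline w}$ of the top cell with $m_{\underline w}$ produces a characteristic map $\Phi_w:D^{\ell(w)}\to X_w$ that is a homeomorphism on the open disk onto $C_w$ and carries $\partial D^{\ell(w)}$ into the lower skeleton. Since Bruhat intervals are finite and $S$ is finite, each length occurs only finitely often, so the skeleta $\bigsqcup_{\ell(w)\le n}C_w$ are finite unions of compact Schubert varieties, hence compact, closed, and closure-finite.

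The final step is to match the weak topology with the $k_\omega$ topology. The skeleta form an increasing exhaustion by compact closed subsets, and I would show it is cofinal with the given $k_\omega$-decomposition $\{q(G_n)\}$ in both directions: each compact skeleton lies in some $q(G_n)$ because compact subsets of a $k_\omega$-space sit inside a member of its decomposition, and each $q(G_n)$ lies in a finite skeleton by the explicit form of the $k_\omega$-decomposition of $G$ in \cite{Hartnick/Koehl/Mars:2012}. Cofinality forces the two colimit topologies to agree, so $G/P_J$ carries exactly the weak topology determined by the closed cells, completing the verification of the CW axioms.

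I expect the main obstacle to be the middle step: setting up the real Bott--Samelson varieties $Z_{\underline w}$ as honest compact manifolds with the stated CW structure, and verifying that $m_{\underline w}$ is continuous for the $k_\omega$ topology, is a homeomorphism over the open cell, and collapses the complement into strictly lower cells. The delicate points are the passage to the balanced-product quotient topology and the real analogue of the classical resolution statements, with circle fibres in place of complex projective lines. By contrast the topology-matching of the last step is technically fiddly but routine once the explicit $k_\omega$-decomposition of $G$ is invoked.
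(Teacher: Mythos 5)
First, a structural remark: the paper does not prove this proposition at all --- it is imported verbatim, with citation, as \cite[Proposition~3.7]{Harring/Koehl:2023}, so there is no in-paper argument to compare yours against; your sketch can only be judged on its own merits and against the cited reference. On its merits, your route is the standard and viable one: parametrize each stratum $C_w=BwP_J/P_J$ by the inversion root groups to get $C_w\cong\mathbb{R}^{\ell(w)}$, extract characteristic maps from (real) Bott--Samelson resolutions, and identify the weak topology with the $k_\omega$ quotient topology by cofinality of the skeleton filtration with $\{q(G_n)\}$. The cofinality step is sound: skeleta are finite unions of compact Schubert varieties (only finitely many $w\in W^J$ have a given length), hence compact and therefore contained in some $q(G_n)$, while conversely each $q(G_n)$ lies in a finite skeleton because a product of $n$ elements of the compact generating pieces lies in $\bigcup_{\ell(w)\le n}BwB$; two mutually cofinal filtrations by compact subsets of the same Hausdorff space induce the same colimit topology.

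The genuine soft spot is the one you flag but do not resolve, and it is not merely ``fiddly'': with $Z_{\underline w}:=P_{i_1}\times^B\cdots\times^B P_{i_\ell}/B$, each $P_{i_j}$ in a minimal Kac--Moody group is infinite-dimensional (it contains all of $U^+$), so $Z_{\underline w}$ is a quotient of a non-compact, non-locally-compact space, and neither its compactness, nor Hausdorffness, nor its manifold structure follows from ``each fibre is $\mathbb{S}^1$''; continuity of $m_{\underline w}$ out of such a quotient is likewise not free. The standard repair is to change the model: by the Iwasawa decomposition of the rank-one Levi, $P_{i_j}=K_{i_j}B$ with $K_{i_j}\cong\mathrm{SO}(2)$ carrying the Lie topology (here property (iii) genuinely applies, since $K_{i_j}$ lies in a spherical Levi factor) and $K_{i_j}\cap B$ finite; so one may realize $Z_{\underline w}$ as the quotient of the compact group $K_{i_1}\times\cdots\times K_{i_\ell}$ by the twisted action of the finite group $\prod_j\bigl(K_{i_j}\cap B\bigr)$, which is manifestly compact Hausdorff, and then $m_{\underline w}$ is continuous simply because multiplication in $(G,\tau)$ is continuous (property (ii)). A smaller caveat of the same kind: property (iii) alone does not upgrade the bijection $U_w\to C_w$ to a homeomorphism, because $U_w$ need not lie in any spherical Levi factor; what is needed is the finer structure theory of \cite{Hartnick/Koehl/Mars:2012} (the Kac--Peterson topology induces the Lie topology on finite products of real root groups, and the Bruhat parametrizations are open onto their images), which you do cite, so this is a matter of precision rather than of substance. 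With these two repairs your argument goes through.
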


\begin{corollary} \label{CW}
Let $G$ be a split real Kac--Moody group of irreducible symmetrizable type, let $K$ be the fixed-point subgroup of $G$ with respect to the Cartan--Chevalley involution, and let $\mathrm{Spin} \to K$ be the double cover constructed in \cite{Ghatei/Horn/Koehl/Weiss:2017}, all groups endowed with the Kac--Peterson topology (and its obvious derivates). Then $K$ and $\mathrm{Spin}$ both admit a CW decomposition.
\end{corollary}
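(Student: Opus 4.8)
The plan is to reduce both assertions to the single principle that a covering space of a CW complex is again a CW complex, the cells being lifted along the covering map via their characteristic maps. The base case is provided by Proposition~\ref{flagmanifold}: the generalized flag manifold $G/P_J$ carries the Bruhat CW decomposition with cells indexed by $W^J$. Specializing to $J=\emptyset$, so that $P_\emptyset=B$ is a Borel subgroup, the full flag manifold $G/B$ is a (typically infinite-dimensional and infinitely-celled) CW complex. Since the natural map $K/K_\emptyset\to G/B$ is a covering map (\cite[Lemma~4.8]{Harring/Koehl:2023} and the discussion preceding the statement), the space $K/K_\emptyset$ inherits a CW decomposition.

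For the decisive step I would climb from $K/K_\emptyset$ to $K$ itself along the fibration $K_\emptyset\to K\to K/K_\emptyset$, which is furnished by Proposition~\ref{hurewicz} --- its proof uses only that the ambient group is $k_\omega$, hence paracompact, and that the subgroup is closed, and both properties persist for the $k_\omega$-group $K$ and its closed subgroup $K_\emptyset$. The point is that the fibre is \emph{discrete}: as the Cartan--Chevalley involution $\theta$ inverts the split torus $T$ and interchanges $U^+$ with $U^-$, while $B\cap B^-=T$, one finds $K_\emptyset=K\cap B=\{t\in T:t^2=1\}$, a finite $2$-torsion group. A Hurewicz fibration whose fibres are discrete (hence totally path-disconnected) has unique path lifting, and over a base that is locally path-connected and semilocally simply connected --- both guaranteed by the CW structure just produced on $K/K_\emptyset$ --- such a fibration is an honest covering map. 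Thus $K$ is a covering space of the CW complex $K/K_\emptyset$ and therefore admits a CW decomposition; and since $\mathrm{Spin}\to K$ is a two-fold cover, the same principle exhibits $\mathrm{Spin}$ as a CW complex as well.

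The main obstacle I anticipate lies entirely in the second step, namely promoting the fibration $K_\emptyset\to K\to K/K_\emptyset$ to a bona fide covering map. This rests on two verifications: first, that $K_\emptyset$ is genuinely finite --- equivalently, that $K\cap B$ carries no torus or unipotent part, which is exactly the role played by the identity $B\cap B^-=T$ together with $\theta(U^+)=U^-$; and second, the unique-path-lifting criterion over the locally contractible base $K/K_\emptyset$. A minor bookkeeping point is that $K$ may be disconnected, its component group being controlled by the $2$-torsion of $T$, but this is harmless: a covering of a connected CW complex is CW on each sheet, and CW complexes are closed under disjoint unions. Everything else reduces to the routine lifting of cells along covering maps.
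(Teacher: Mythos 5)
Your skeleton is the same as the paper's: take the Bruhat CW structure on $G/B$ from Proposition~\ref{flagmanifold}, identify $K/(K\cap B)$ with $G/B$, observe that $K\cap B=T\cap K$ is finite, and lift the CW structure along the coverings $\mathrm{Spin}\to K\to K/(T\cap K)$. Your direct verification that $K\cap B=K\cap T=\{t\in T:t^2=1\}$ (via $\theta(U^+)=U^-$ and $B\cap B^-=T$) is correct and substitutes for the paper's citation of \cite[Lemma~3.26]{Freyn/Hartnick/Horn/Koehl:2020}. One scope caveat: you quote \cite[Lemma~4.8]{Harring/Koehl:2023} and the surrounding discussion, which the paper formulates only for irreducible simply-laced type distinct from $A_1$, whereas the corollary is asserted for all irreducible symmetrizable types; the paper instead uses the homeomorphism $K/(T\cap K)\cong G/B$ of \cite[Lemma~4.1]{Harring/Koehl:2023}, which holds in the required generality and makes your first covering step trivial.

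The genuine gap is in what you call the decisive step. The principle you invoke --- a Hurewicz fibration with discrete fibres over a locally path-connected, semilocally simply connected base is a covering map --- is false as stated: one also needs local path-connectedness of the \emph{total} space (or a substitute such as Dold's theorem). Concretely, for small open $U\subseteq K/K_\emptyset$ the candidate sheets over $U$ are precisely the path components of $p^{-1}(U)$; unique path lifting together with the lifting criterion does produce a continuous section through each point of the fibre, so each sheet maps homeomorphically onto $U$, but nothing in your hypotheses forces these sheets to be \emph{open} in $p^{-1}(U)$. And local niceness of $K$ is essentially what the corollary is meant to establish, so you cannot assume it. The step can be rescued by Dold's theorem (a fibration over a paracompact locally contractible base is locally fibre-homotopy-trivial, and a fibre homotopy equivalence between spaces with discrete fibres is a homeomorphism over the base), but the much simpler repair is the one the paper uses implicitly: $T\cap K$ is a \emph{finite} group acting freely by right translations on the Hausdorff space $K$, every free action of a finite group on a Hausdorff space is properly discontinuous, and hence $K\to K/(T\cap K)$, and likewise the two-fold cover $\mathrm{Spin}\to K$, is a covering map. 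This bypasses Proposition~\ref{hurewicz} and all fibration theory, and in particular removes your need to extend that proposition from $G$ to $K$.
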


\begin{proof}
By Proposition~\ref{flagmanifold} the building admits a CW decomposition $$K/(T \cap K) \cong G/P_\emptyset = G/B = \bigsqcup_{w \in W} BwB/B,$$ where the homeomorphism $K/(T \cap K) \cong G/B$ follows from \cite[Lemma~4.1]{Harring/Koehl:2023}. Since $T \cap K$ is finite (\cite[Lemma~3.26]{Freyn/Hartnick/Horn/Koehl:2020}), the surjections $\mathrm{Spin} \to K \to K/(T \cap K)$ in fact are coverings onto a CW complex, yielding CW decompositions of $K$ and $\mathrm{Spin}$.
\end{proof}

\section{Homotopy groups of Kac--Moody groups}

\begin{theorem}
Simply-laced Kac--Moody groups and their maximal compact subgroups have countable homotopy groups.
\end{theorem}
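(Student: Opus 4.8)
The plan is to reduce everything to the single topological fact that a \emph{connected countable} CW complex has countable homotopy groups, and to supply the requisite countable CW structures from the Bruhat decomposition. Since a finite direct product of groups with countable homotopy groups again has countable homotopy groups, and a simply-laced Kac--Moody group splits as a product over its irreducible factors, I may assume $G$ is of irreducible simply-laced (hence symmetrizable) type; the rank-one case $A_1$ is a finite-dimensional Lie group, covered by \cite{Milnor:1959}, so I assume $G\neq A_1$. I first treat the maximal compact subgroup $K$ and then transfer the conclusion to $G$.

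For $K$: by Corollary~\ref{CW}, $K$ carries a CW decomposition arising from a finite-fibred covering $K\to K/(T\cap K)\cong G/B$, and by Proposition~\ref{flagmanifold} the base $G/B=\bigsqcup_{w\in W}BwB/B$ has exactly one cell per element of the Coxeter group $W$. As $W$ is countable (indeed with only finitely many elements of each word length), $G/B$ is a countable CW complex, and since $T\cap K$ is finite (\cite[Lemma~3.26]{Freyn/Hartnick/Horn/Koehl:2020}) the total space $K$ is a countable CW complex with only finitely many components. Fixing a component, its fundamental group is countable, being generated by the countably many $1$- and $2$-cells; passing to the universal cover $\widetilde{K}$ — again a countable CW complex, its cells indexed by (cells of $K$)$\times\pi_1(K)$ — reduces the higher groups to the simply connected case. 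There the cellular chain complex is free abelian on countably many generators, so each $H_n(\widetilde{K};\mathbb{Z})$ is countable, and Serre's mod-$\mathcal{C}$ Hurewicz theorem applied to the Serre class $\mathcal{C}$ of countable abelian groups yields that every $\pi_n(\widetilde{K})=\pi_n(K)$, $n\geq 2$, is countable. Alternatively one may run this argument directly on the simply connected spin cover $\mathrm{Spin}\to K$ of \cite{Ghatei/Horn/Koehl/Weiss:2017}, which by Corollary~\ref{CW} and \cite[Proposition~4.9]{Harring/Koehl:2023} is a simply connected countable CW complex, avoiding the passage to the universal cover.

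To transfer to $G$, I would use that $K$, being the fixed-point set of the Cartan--Chevalley involution, is Zariski-closed in $G$, so that Proposition~\ref{hurewicz} makes $K\to G\to G/K$ a Hurewicz fibration. The homogeneous space $G/K$ is contractible — the Kac--Moody analogue of the Cartan/Iwasawa decomposition $G=KAN$ with contractible $AN$ — whence the long exact homotopy sequence gives isomorphisms $\pi_n(G)\cong\pi_n(K)$ for all $n$, transporting countability to $G$. Should the contractibility of $G/K$ not be directly at hand, the same conclusion follows from the transfer principle \cite[Theorem~A.15]{Harring/Koehl:2023} already invoked above.

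The genuine content, and the step I expect to require the most care, is the countability of the homotopy groups of a countable CW complex via the Serre class $\mathcal{C}$ of countable abelian groups: here one must verify that $\mathcal{C}$ is closed under sub-objects, quotients and extensions so that the mod-$\mathcal{C}$ Hurewicz theorem applies, this being precisely the general fact that reproves \cite{Milnor:1959} in the present setting. The remaining inputs — that the Bruhat cells are countable and that the coverings and the fibration $K\to G\to G/K$ behave homotopically as expected — are exactly what Proposition~\ref{flagmanifold}, Proposition~\ref{hurewicz} and Corollary~\ref{CW} are engineered to provide, so no further topological pathology in the $k_\omega$/paracompact category should intervene.
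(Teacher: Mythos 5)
Your proposal is correct, but it takes a genuinely different route from the paper's. The paper never exploits the CW structure on $K$ itself: instead it uses property (iii) of the Kac--Peterson topology together with Palais' slice theorem to obtain a fibration $\mathrm{SO}(3) \to K \to K/\mathrm{SO}(3)$ along an $A_2$-subdiagram, lifts the Bruhat decomposition of $G/P$ through the finite covering $K/\mathrm{SO}(3) \to K/\mathrm{SO}(3)(T\cap K) \cong G/P$ to obtain a countable CW structure on the \emph{base} $K/\mathrm{SO}(3)$, quotes \cite{Milnor:1959} for countability of $\pi_k(\mathrm{SO}(3))$, and then sandwiches $\pi_k(K)$ in the resulting long exact sequence (the quotient of $\pi_k(K)$ by the image of $\pi_k(\mathrm{SO}(3))$ injects into $\pi_k(K/\mathrm{SO}(3))$). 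You instead place the countable CW structure directly on $K$ (via Corollary~\ref{CW}, countability of $W$, and finiteness of $T \cap K$) and then invoke the general principle that a connected countable CW complex has countable homotopy groups, which you prove via the universal cover, cellular homology, and the mod-$\mathcal{C}$ Hurewicz theorem. Note that the paper tacitly relies on exactly this same principle for the base $K/\mathrm{SO}(3)$ (``So, all $\pi_k(K/\mathrm{SO}(3))$, $k\geq 0$, are countable''), so your argument is, if anything, more economical: it dispenses with the slice theorem and the $\mathrm{SO}(3)$-fibration altogether and makes the CW-countability input explicit rather than assumed; your explicit treatment of reducible types and of $A_1$ is likewise a point the paper leaves implicit. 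What the paper's technique buys in exchange is that the fibration-plus-Bruhat-cell argument is precisely the tool redeployed for the $E_n$ homotopy computations in the next theorem. Two small caveats on your side: for the mod-$\mathcal{C}$ Hurewicz theorem one needs not only closure of $\mathcal{C}$ under subobjects, quotients and extensions, but also Serre's further axioms (closure under $\otimes$ and $\mathrm{Tor}$, and $H_n(\mathrm{K}(A,1)) \in \mathcal{C}$ for $A \in \mathcal{C}$), all of which do hold for countable abelian groups; and your assertion that $K$ is Zariski-closed, so that Proposition~\ref{hurewicz} applies to $K \to G \to G/K$, is verified nowhere in the paper, but your fallback via \cite[Theorem~A.15]{Harring/Koehl:2023} is sound --- the paper itself performs this transfer in its Remark via \cite[Corollary~A.12]{Harring/Koehl:2023} rather than inside the proof.
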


\begin{proof}
Property (iii) of the Kac--Peterson topology $\tau$ from the introduction implies that any embedded subgroup $\mathrm{SO}(3) \stackrel{\iota}{\longrightarrow} K$ along an $A_2$-subdiagram inherits the Lie topology when restricting $\tau$. Palais' slice theorem \cite{Palais:1961} becomes applicable and yields the homotopy exact sequence
\begin{eqnarray}
\pi_{k+1}(K/\mathrm{SO}(3)) \longrightarrow  \pi_k(\mathrm{SO}(3)) \stackrel{\pi_k(\iota)}{\longrightarrow} \pi_k(K) \stackrel{\pi_k(\pi)}{\longrightarrow}
\pi_k(K/\mathrm{SO}(3)) \label{countable}
\end{eqnarray}
for any such fibering $\mathrm{SO}(3) \stackrel{\iota}{\longrightarrow} K \stackrel{\pi}{\longrightarrow} K/\mathrm{SO}(3)$ along an $A_2$-subdiagram.

The map $K/\mathrm{SO}(3) \to K/ \mathrm{SO}(3)(T \cap K)$ affords a finite covering (see \cite[Lemma~4.3]{Harring/Koehl:2023}). In particular, the Bruhat decomposition of $K/\mathrm{SO}(3)(T \cap K) \cong G/P$ (cf.\ \cite[Lemma~4.1]{Harring/Koehl:2023} for this homeomorphism), where $P$ is a parabolic corresponding to the embedded $A_2$-diagram from above, induces a countable CW complex structure (see \cite[Theorem~6.56, Remark~1]{Abramenko/Brown:2008} and \cite[Proposition~3.7]{Harring/Koehl:2023}; see also Proposition \ref{flagmanifold} above), which lifts to a countable CW complex structure on $K/\mathrm{SO}(3)$. So, all $\pi_k(K/\mathrm{SO}(3))$, $k \geq 0$, are countable.

The Lie group $\mathrm{SO}(3)$ (see property (iii) of the Kac--Peterson topology $\tau$ from the introduction) also has countable $\pi_k(\mathrm{SO}(3))$, $k \geq 0$ (see \cite{Milnor:1959}).

Therefore the homotopy exact sequence (\ref{countable}) implies $\pi_k(K)$, $k \geq 0$, countable by the homomorphism theorem of groups: the factor group $\pi_k(K)/\pi_k(\iota)(\pi_k(\mathrm{SO}(3)))$ is isomorphic to a subgroup of $\pi_k(K/\mathrm{SO}(3))$. 
\end{proof}

\begin{remark}
\begin{enumerate}
\item By \cite[Corollary~A.12]{Harring/Koehl:2023} the preceding proof establishes countable $\pi_k(G) \cong \pi_k(K)$, $k \geq 0$, as well.
\item The preceding proof in fact works verbatim for any irreducible type distinct from $A_1$ in which the non-zero off-diagonal entries of the generalized Cartan matrix are all odd (see \cite[Notation~3.13]{Harring/Koehl:2023}).
\item Countability of $\pi_k(K)$, $k \geq 0$, in fact holds for any symmetrizable type, but one cannot expect $\Spin \to K$ to be universal in this generality (see \cite[Third Theorem]{Harring/Koehl:2023}).
\end{enumerate}
\end{remark}

Computing the homotopy groups of the maximal compact group $K = K(E_n)$ in the $E_{n}$ series is of particular interest. Conveniently, for $E_8$ one has $K \cong \mathrm{SO}(16)$, so the first $15$ homotopy groups are listed in the introduction.

\begin{theorem}
Let $n \geq 8$ and let $K(E_n)$ be the maximal compact subgroup of type $E_n$. Then for $1 \leq k \leq 6$ one has $$\pi_k(K(E_n)) = \pi_k(\mathrm{SO}(16)).$$
In particular, $\pi_1(K(E_{10})) = C_2$, $\pi_3(K(E_{10})) = \mathbb{Z}$ and $\pi_0(K(E_{10})) = \pi_2(K(E_{10})) = \pi_4(K(E_{10})) = \pi_5(K(E_{10})) = \pi_6(K(E_{10})) = \{ 1 \}$.
\end{theorem}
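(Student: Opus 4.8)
The plan is to compare $K(E_n)$ with $K(E_8)\cong\mathrm{SO}(16)$ one node at a time. For $m\ge 8$ let $J\subset E_{m+1}$ be the sub-diagram obtained by deleting the tip of the long arm, so that $W_J=W(E_m)$ and the Levi factor of $P_J$ is of type $E_m$; then $K_J:=K(E_{m+1})\cap P_J$ is homotopy equivalent to $K(E_m)$ (the finite group $T\cap K$ does not affect the higher homotopy). By Proposition~\ref{hurewicz} the inclusion becomes a Hurewicz fibration
\[
K(E_m)\simeq K_J\longrightarrow K(E_{m+1})\longrightarrow B_m:=K(E_{m+1})/K_J,
\]
and by \cite[Lemma~4.8]{Harring/Koehl:2023} the base $B_m$ is the universal cover of the flag manifold $G(E_{m+1})/P_J$. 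The theorem will follow from the long exact homotopy sequence of this fibration once the connectivity of $B_m$ is understood, together with the base case $K(E_8)\cong\mathrm{SO}(16)$.

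The heart of the argument is to show that $B_m$ is $(m-2)$-connected. First I would read off the low-dimensional Schubert cells of $G(E_{m+1})/P_J$ from Proposition~\ref{flagmanifold}. Since the tip of the long arm is a leaf, every minimal coset representative in $W^J$ is forced to walk inward along the long arm, and no branching is possible before the trivalent node is reached; hence $W^J$ contains exactly one element of each length $0,1,\dots,m-2$, and the corresponding cells assemble the Schubert variety of the type-$A_{m-2}$ sub-path, which over $\mathbb{R}$ is $\mathbb{R}\mathrm{P}^{m-2}$. Thus the $(m-2)$-skeleton of $G(E_{m+1})/P_J$ is $\mathbb{R}\mathrm{P}^{m-2}$, its universal cover contributes an $\mathbb{S}^{m-2}$, and $B_m$ is at least $(m-3)$-connected. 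To gain the final degree I would extend the long arm by one node into the medium (or short) arm: the resulting path is of type $A_{m-1}$, so its real Schubert variety is $\mathbb{R}\mathrm{P}^{m-1}\supset\mathbb{R}\mathrm{P}^{m-2}$. Pulling back along the (connected) double cover, the equatorial $\mathbb{S}^{m-2}$ sits inside an $\mathbb{S}^{m-1}$ and therefore bounds a hemisphere in $B_m$; consequently $H_{m-2}(B_m)=0$, and by the Hurewicz theorem $B_m$ is $(m-2)$-connected.

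Granting this, the homotopy exact sequence gives isomorphisms $\pi_k(K(E_m))\cong\pi_k(K(E_{m+1}))$ for $k\le m-3$, while for $k=m-2$ it reads $\pi_{m-2}(K(E_m))\twoheadrightarrow\pi_{m-2}(K(E_{m+1}))$ since $\pi_{m-2}(B_m)=0$. Starting from $K(E_8)\cong\mathrm{SO}(16)$ and inducting on $m$, the range $m\ge 9$ yields $\pi_k(K(E_{m+1}))\cong\pi_k(K(E_m))$ for all $k\le 6$; the only delicate step is $m=8$, where the isomorphism is immediate for $k\le 5$ and, for $k=6$, the surjection $\pi_6(\mathrm{SO}(16))\twoheadrightarrow\pi_6(K(E_9))$ forces $\pi_6(K(E_9))=0$ because $\pi_6(\mathrm{SO}(16))=0$. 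Chaining these isomorphisms establishes $\pi_k(K(E_n))=\pi_k(\mathrm{SO}(16))$ for all $n\ge 8$ and $1\le k\le 6$, and the values listed for $E_{10}$ are then simply the corresponding homotopy groups of $\mathrm{SO}(16)$.

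The step I expect to be the main obstacle is the purely combinatorial claim underlying the skeleton computation: that the minimal coset representatives of $W(E_{m+1})/W(E_m)$ grow one per length up to the branch node, so that the $(m-2)$-skeleton is honestly $\mathbb{R}\mathrm{P}^{m-2}$ with no spurious lower-dimensional cells. Making this rigorous amounts to verifying that there is no shorter alternative route into the diagram, i.e.\ that the first length at which $|W^J|$ exceeds $1$ is $m-1$, realised exactly by turning into the short and medium arms. A secondary point to pin down is the homotopy equivalence $K_J\simeq K(E_m)$, namely that the finite intersection $T\cap K$ contributes nothing to $\pi_k$ in the range considered.
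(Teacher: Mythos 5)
Your proposal is correct, and in outline it is the same argument as the paper's: the Hurewicz fibration of Proposition~\ref{hurewicz} for the pair $E_m\subset E_{m+1}$, a Schubert-cell analysis of the base via Proposition~\ref{flagmanifold}, the long exact homotopy sequence, and induction starting from $K(E_8)\cong\mathrm{SO}(16)$. Where the two differ is at the quantitative heart of the matter, and there your version is the more careful one. The paper compares $K(E_9)/K(E_8)$ with $K(A_8)/K(A_7)\cong\mathbb{S}^8$ and asserts identical cell decompositions up to dimension $7$, hence $\pi_k\bigl(K(E_9)/K(E_8)\bigr)=\{1\}$ for $1\le k\le 7$. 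In fact the walk from the affine node of $E_9$ reaches the trivalent node already at word length $6$, so $W^J$ contains \emph{two} elements of length $7$ (one per branch), and the cell structures agree only through dimension $6$ --- exactly your count: one element of $W^J$ per length $\le m-2$, first branching at length $m-1$. Consequently the base is $(m-2)$-connected but in general no better, and for $m=8$ the paper's claimed vanishing of $\pi_7$ of the base is not justified by the cell comparison (nor is it needed). Your two supplementary arguments are precisely what make the proof close despite this: first, the hemisphere argument --- lifting $\mathbb{R}\mathrm{P}^{m-1}\supset\mathbb{R}\mathrm{P}^{m-2}$ along the connected double cover to $\mathbb{S}^{m-1}\supset\mathbb{S}^{m-2}$ --- which supplies $H_{m-2}(B_m)=0$ (this does \emph{not} follow from knowing the $(m-2)$-skeleton alone, since that skeleton is a sphere with nonvanishing top homology) and hence $(m-2)$-connectivity via Hurewicz; second, the observation that at the only tight step ($m=8$, $k=6$) the surjection $\pi_6(\mathrm{SO}(16))\twoheadrightarrow\pi_6(K(E_9))$ together with $\pi_6(\mathrm{SO}(16))=\{1\}$ suffices, so that the unjustified $\pi_7$-vanishing is never invoked. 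The two points you flag as needing verification are both standard: the unique-walk property of minimal coset representatives up to the branch node is the statement the paper quotes from \cite[Lemma~2.4.3]{Bjoerner/Brenti:2005}, and $T\cap K$ is finite, so $K(E_m)$ sits inside $K_J$ with finite index (as its identity component) and the inclusion induces isomorphisms on $\pi_k$ for all $k\ge 1$.
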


\begin{proof}
One has $\mathrm{SO}(9)/\mathrm{SO}(8) \cong \mathbb{S}^8$. Considering $\mathrm{SO}(n)$ as the maximal compact subgroup of $\mathrm{SL}(n,\mathbb{R})$, one obtains $K(A_8)/K(A_7) \cong \mathbb{S}^8$. Comparing low-dimensional cells as in \cite[p.~173]{Kramer:2002} yields identical cell decompositions up to dimension $7$  in $K(E_9)/K(E_8)$ (in the sense of \cite[Proposition~3.7]{Harring/Koehl:2023}; see also Proposition \ref{flagmanifold} above): only beyond the coset $s_3s_4s_5s_6s_7s_8s_9W_{E_8}$ inside $W_{E_9}$ one can actually notice the differences between the diagrams $A_8$ and $E_9$ (see \cite[Lemma~2.4.3]{Bjoerner/Brenti:2005}). Hence for all $1 \leq k \leq 7$ one has $$\pi_k(K(E_9)/K(E_8)) = \pi_k(\mathbb{S}^8) = \{ 1 \}.$$
Proposition~\ref{hurewicz} yields the homotopy exact sequence 
\begin{eqnarray*}
\{ 1 \} = \pi_{k+1}(K(E_9)/K(E_8)) \to  \pi_k(K(E_8)) \to \pi_k(K(E_9)) \to
\pi_k(K(E_9)/K(E_8)) = \{ 1 \}, \quad\text{$1 \leq k \leq 6$,}
\end{eqnarray*}
from which the assertion of the theorem follows for $n=9$. A straightforward induction on $n$ finishes the proof.
\end{proof}

A more detailled investigation requires a better understanding of the set $W^J$ from \cite[Proposition~3.7]{Harring/Koehl:2023} (see also Proposition~\ref{flagmanifold} above). There exist standard methods of investigation in Coxeter group combinatorics \cite[Chapter 7]{Bjoerner/Brenti:2005}.

\section{String groups}
\begin{theorem} \label{string}
For $n \geq 8$ there exists a topological group $\mathrm{String}(E_n)$ with vanishing homotopy up to and including dimension $6$ and homotopy identical to $\mathrm{Spin}(E_n)$ beyond that admitting a quotient map $\mathrm{String}(E_n) \to \mathrm{Spin}(E_n)$.
\end{theorem}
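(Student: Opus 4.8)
The plan is to realize $\mathrm{String}(E_n)$ as the $3$-connected cover of $\mathrm{Spin}(E_n)$, constructed — in the spirit of \cite[Theorem~5.1]{Stolz:1996} — as an extension of topological groups by a $K(\mathbb{Z},2)$. First I would record the homotopy input. Since $\mathrm{Spin}(E_n) \to K(E_n)$ is the topologically simply connected double cover, one has $\pi_0 = \pi_1 = \pi_2 = 0$, while the preceding theorem gives $\pi_3(\mathrm{Spin}(E_n)) = \pi_3(\mathrm{SO}(16)) = \mathbb{Z}$ and $\pi_4 = \pi_5 = \pi_6 = 0$. Being $2$-connected, $\mathrm{Spin}(E_n)$ satisfies the Hurewicz isomorphism $\pi_3 \cong H_3(\mathrm{Spin}(E_n);\mathbb{Z}) \cong \mathbb{Z}$, and as $H_2$ vanishes the universal coefficient theorem yields $H^3(\mathrm{Spin}(E_n);\mathbb{Z}) \cong \mathbb{Z}$; I fix a generator $\gamma$ dual to the generator of $\pi_3$.

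The second step is to choose a topological-group model for $K(\mathbb{Z},2)$. Following Stolz, I would take $A := \mathrm{PU}(\mathcal{A})$, the projective unitary group of a suitable factor $\mathcal{A}$ (hyperfinite type $\mathrm{II}_1$ or $\mathrm{III}_1$) whose unitary group $\mathrm{U}(\mathcal{A})$ is contractible. The central extension $\mathrm{U}(1) \to \mathrm{U}(\mathcal{A}) \to A$ then exhibits $A$ as a $K(\mathbb{Z},2)$ with $\pi_2(A) = \mathbb{Z}$ and all other homotopy trivial, so that $BA \simeq K(\mathbb{Z},3)$. The heart of the argument is to produce an extension of topological groups
\[
1 \longrightarrow A \longrightarrow \mathrm{String}(E_n) \longrightarrow \mathrm{Spin}(E_n) \longrightarrow 1
\]
classified by $\gamma$, i.e.\ whose underlying $A$-bundle is pulled back along a map $\mathrm{Spin}(E_n) \to BA = K(\mathbb{Z},3)$ representing $\gamma$. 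Here the CW structure of $\mathrm{Spin}(E_n)$ (Corollary~\ref{CW}) together with the paracompactness of its $k_\omega$-topology are exactly what make bundle classification and the transport of Stolz's finite-dimensional argument available.

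Granting the extension, the homotopy of $\mathrm{String}(E_n)$ follows from the long exact sequence of the Hurewicz fibration $A \to \mathrm{String}(E_n) \to \mathrm{Spin}(E_n)$ (cf.\ Proposition~\ref{hurewicz}): the connecting homomorphism $\partial \colon \pi_3(\mathrm{Spin}(E_n)) = \mathbb{Z} \to \pi_2(A) = \mathbb{Z}$ is, by the choice of $\gamma$ as a generator, an isomorphism, whence $\pi_1(\mathrm{String}(E_n)) = \pi_2(\mathrm{String}(E_n)) = \pi_3(\mathrm{String}(E_n)) = 0$, while $\pi_k(\mathrm{String}(E_n)) \cong \pi_k(\mathrm{Spin}(E_n))$ for $k \geq 4$ because $\pi_k(A) = 0$ there. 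Combined with $\pi_4 = \pi_5 = \pi_6 = 0$ from the preceding theorem, this gives vanishing homotopy up to and including dimension $6$ and agreement with $\mathrm{Spin}(E_n)$ in all higher dimensions, the group projection being the asserted quotient map.

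The main obstacle is precisely this extension step: \cite[Theorem~5.1]{Stolz:1996} is tailored to finite-dimensional compact Lie groups, whereas $\mathrm{Spin}(E_n)$ is an infinite-dimensional $k_\omega$-group. The real work is to verify that the functional-analytic model yielding the \emph{group multiplication} on the total space — not merely the fibration — persists in this setting, using only the CW and paracompactness data furnished by Corollary~\ref{CW} and Proposition~\ref{hurewicz} in lieu of a smooth structure. As a consistency check one may appeal to \cite[Theorem~2.13]{Rovelli:2019}: since $\mathrm{Spin}(E_n)$ has countable homotopy groups, it already admits a Whitehead tower of topological groups and hence an abstract $\mathrm{String}(E_n)$; the Stolz-type extension then serves to equip this stage of the tower with an explicit model.
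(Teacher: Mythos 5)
Your homotopy-theoretic bookkeeping agrees with the paper's: Hurewicz plus universal coefficients give $H^3(\mathrm{Spin}(E_n);\mathbb{Z}) \cong \mathbb{Z}$, the projective unitary group serves as the topological-group model of a $K(\mathbb{Z},2)$ (Proposition~\ref{projectiveunitary}), bundle classification over $\mathrm{Spin}(E_n)$ is legitimized by the CW structure and paracompactness (Corollary~\ref{CW}), and your long-exact-sequence computation of the homotopy groups is correct \emph{provided} a fibration of the stated kind exists. The problem is that the one step you explicitly defer --- ``granting the extension'' --- is the entire content of the theorem. You never produce a topological group; you only compute what its homotopy would be if it existed, and you yourself label the construction ``the main obstacle'' and ``the real work''. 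That is a genuine gap, not a technicality: an existence theorem whose proof grants the existence proves nothing, and the fallback you offer via \cite[Theorem~2.13]{Rovelli:2019} is presented only as a consistency check (it would give abstract existence --- indeed that is how the paper proves its Whitehead-tower theorem --- but it does not produce the Stolz-type model that Theorem~\ref{string} is meant to supply, and one would still have to check that the maps in Rovelli's tower are quotient maps of topological groups).

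Moreover, the extension you ask for is both stronger than necessary and not the one Stolz's method yields. You want a group structure on the total space of the $\mathrm{PU}(H)$-bundle $P \to \mathrm{Spin}(E_n)$ classified by the generator $\gamma$, i.e.\ an extension with kernel $\mathrm{PU}(H)$; this is essentially the (much harder) Nikolaus--Sachse--Wockel-type refinement, and nothing in the paper's toolkit delivers it. The paper, following \cite[Proof of Theorem~5.1]{Stolz:1996}, sidesteps it entirely: one never makes $P$ into a group. Instead one forms $\mathrm{Aut}(P)$, the group of $\mathrm{PU}(H)$-equivariant homeomorphisms of $P$, observes that each $f \in \mathrm{Aut}(P)$ covers a homeomorphism $\tilde f$ of the base, and defines $\mathrm{String}(E_n) \subset \mathrm{Aut}(P)$ as those $f$ for which $\tilde f$ is right translation by an element of $\mathrm{Spin}(E_n)$. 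This is automatically a topological group; it surjects onto $\mathrm{Spin}(E_n)$ because right translation $R_g$ is homotopic to the identity ($\mathrm{Spin}(E_n)$ being connected), so $R_g^*P \cong P$ and an equivariant homeomorphism covering $R_g$ exists; and its kernel is the gauge group $\mathrm{Gauge}(P)$ --- \emph{not} $\mathrm{PU}(H)$ --- so the relevant sequence is $\{1\} \to \mathrm{Gauge}(P) \to \mathrm{String}(E_n) \to \mathrm{Spin}(E_n) \to \{1\}$, with the killing of $\pi_3$ and preservation of the higher homotopy supplied by \cite[Lemma~5.6]{Stolz:1996} rather than by your connecting-map argument. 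The missing idea in your proposal is exactly this replacement of ``multiplication on the total space'' by ``group of bundle automorphisms covering right translations''; with it, the construction goes through verbatim in the $k_\omega$ setting using only Corollary~\ref{CW} and Proposition~\ref{hurewicz}, which is the point of the paper's proof.
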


The proof follows the strategy of \cite[Proof of Theorem~5.1]{Stolz:1996}, reproduced here for the reader's convenience as a series of the following results:

\begin{proposition} \label{projectiveunitary}
Let $H$ be an infinite-dimensional separable complex Hilbert space and define $\mathrm{PU}(H) := \mathrm{U}(H)/ \{ \lambda \cdot \mathrm{id} \mid \lambda \in \mathrm{U}_1(\mathbb{C}) \}$ with respect to the norm topology on the unitary group $\mathrm{U}(H)$. Then $\mathrm{U}_1(\mathbb{C}) \to \mathrm{U}(H) \to \mathrm{PU}(H)$ is a Hurewicz fibration, and the homotopy of $\mathrm{PU}(H)$ is trivial with the exception of $\pi_2(\mathrm{PU}(H)) = \mathbb{Z}$.
\end{proposition}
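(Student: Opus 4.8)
The plan is to combine Kuiper's contractibility theorem with the long exact homotopy sequence of the indicated fibration, so the first task is to secure the fibration itself. With respect to the norm topology $\mathrm{U}(H)$ is the unitary group of the $C^*$-algebra of bounded operators on $H$ and thus carries the structure of a Banach Lie group, its Lie algebra being the space of bounded skew-adjoint operators. The central circle $\mathrm{U}_1(\mathbb{C}) \cdot \mathrm{id}$ is a closed subgroup isomorphic to $\mathbb{S}^1$ whose Lie algebra $i\mathbb{R} \cdot \mathrm{id}$ is a topologically complemented one-dimensional subspace; the Banach inverse function theorem therefore supplies a local section of the quotient map near the identity coset, and left translation spreads this to local sections everywhere. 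Hence $\mathrm{U}_1(\mathbb{C}) \to \mathrm{U}(H) \to \mathrm{PU}(H)$ is a locally trivial principal $\mathbb{S}^1$-bundle. Being the quotient of a metrizable group by a closed subgroup, $\mathrm{PU}(H)$ is metrizable and in particular paracompact, so this fiber bundle is in fact a Hurewicz fibration, since fiber bundles over paracompact bases enjoy the homotopy lifting property.

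The decisive input is Kuiper's theorem, which asserts that the unitary group of an infinite-dimensional separable complex Hilbert space is contractible in the norm topology; consequently $\pi_k(\mathrm{U}(H)) = \{1\}$ for all $k \geq 0$. Feeding this into the long exact homotopy sequence of the Hurewicz fibration above produces, for each $k$, a segment
\[
\pi_k(\mathrm{U}(H)) \longrightarrow \pi_k(\mathrm{PU}(H)) \stackrel{\partial}{\longrightarrow} \pi_{k-1}(\mathrm{U}_1(\mathbb{C})) \longrightarrow \pi_{k-1}(\mathrm{U}(H)),
\]
in which the two outer terms vanish once $k \geq 2$. The boundary map is then an isomorphism $\pi_k(\mathrm{PU}(H)) \cong \pi_{k-1}(\mathbb{S}^1)$ for $k \geq 2$, while connectedness of the contractible total space $\mathrm{U}(H)$ forces $\pi_0(\mathrm{PU}(H))$ and $\pi_1(\mathrm{PU}(H))$ to be trivial. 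Since $\pi_{k-1}(\mathbb{S}^1) = \mathbb{Z}$ precisely for $k = 2$ and vanishes for every other $k$, I would conclude that the homotopy of $\mathrm{PU}(H)$ is trivial apart from $\pi_2(\mathrm{PU}(H)) = \mathbb{Z}$.

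The routine verifications — that the central circle is closed with complemented Lie algebra, and the bookkeeping of the exact sequence — are entirely standard. The genuine obstacle, and the one nontrivial ingredient, is Kuiper's contractibility theorem: it is what singles out the norm topology (as opposed to, say, the strong operator topology) as the correct setting and what drives the entire computation.
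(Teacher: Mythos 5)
Your proposal is correct, and its computational core --- Kuiper's contractibility theorem fed into the long exact homotopy sequence of the circle bundle --- is exactly the paper's argument; the only genuine difference is how the Hurewicz fibration is secured. The paper invokes Palais' slice theorem for the action of the compact circle group on $\mathrm{U}(H)$ to conclude that $\mathrm{U}_1(\mathbb{C}) \to \mathrm{U}(H) \to \mathrm{PU}(H)$ is locally trivial and ``hence'' a Hurewicz fibration; you instead produce local sections from Banach Lie theory (the central circle is a closed subgroup whose Lie algebra $i\mathbb{R}\cdot\mathrm{id}$ is finite-dimensional, hence topologically complemented, so the inverse function theorem applies) and then upgrade the resulting principal $\mathbb{S}^1$-bundle to a Hurewicz fibration via paracompactness of the metrizable base $\mathrm{PU}(H)$. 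Your route is actually the more scrupulous one at that last step: local triviality by itself yields only a Serre fibration, and the passage to a Hurewicz fibration needs a hypothesis on the base such as paracompactness (this is the Huebsch--Hurewicz uniformization theorem that the paper itself uses for its Kac--Moody fibrations), a point you make explicit and the paper leaves implicit. What the paper's choice buys in return is uniformity of method --- Palais' slice theorem is the same tool it applies to the fibration $\mathrm{SO}(3) \to K \to K/\mathrm{SO}(3)$ in the countability theorem --- and independence from any Banach manifold structure on $\mathrm{U}(H)$ and $\mathrm{PU}(H)$.
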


\begin{proof}
The norm topology turns $\mathrm{U}(H)$ into a contractible group (since $H$ is infinite-dimensional). The embedded circle group $\mathrm{U}_1(\mathbb{C})$ carries the Lie topology, and hence has trivial homotopy with the exception of $\pi_1(\mathrm{U}_1(\mathbb{C})) = \mathbb{Z}$. By Palais' slice theorem \cite{Palais:1961} the fiber bundle $\mathrm{U}_1(\mathbb{C}) \to \mathrm{U}(H) \to \mathrm{PU}(H)$ is locally trivial and, hence, a Hurewicz fibration. The resulting homotopy exact sequence $$\{ 1 \} = \pi_2(\mathrm{U}(H)) \to \pi_2(\mathrm{PU}(H)) \to \pi_1(\mathrm{U}_1(\mathbb{C})) \to \pi_1(\mathrm{U}(H)) = \{ 1 \}$$ completes the proof.
\end{proof}

\begin{corollary}
Let $\mathrm{EPU}(H) \to \mathrm{BPU}(H)$ be a universal $\mathrm{PU}(H)$-bundle (in the sense of, e.g., \cite{Milnor:1956}, \cite[Theorem~4.11.2]{Husemoller:1993}). Then $\mathrm{BPU}(H)$ is an Eilenberg--MacLane space $\mathrm{K}(\mathbb{Z},3)$.
\end{corollary}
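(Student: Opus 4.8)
The goal is to show that the base space $\mathrm{BPU}(H)$ of a universal $\mathrm{PU}(H)$-bundle is an Eilenberg--MacLane space $\mathrm{K}(\mathbb{Z},3)$. The plan is to exploit the fundamental homotopy-theoretic property of universal bundles together with the homotopy of $\mathrm{PU}(H)$ computed in Proposition~\ref{projectiveunitary}.

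First I would invoke the universal bundle $\mathrm{PU}(H) \to \mathrm{EPU}(H) \to \mathrm{BPU}(H)$, whose total space $\mathrm{EPU}(H)$ is contractible by definition (in the sense of, e.g., Milnor's construction). Since the projection of a principal bundle is a fibration, this yields a homotopy exact sequence relating the homotopy of the fibre $\mathrm{PU}(H)$, the total space $\mathrm{EPU}(H)$, and the base $\mathrm{BPU}(H)$. Because $\mathrm{EPU}(H)$ is contractible, all its homotopy groups vanish, and the long exact sequence collapses into isomorphisms $\pi_k(\mathrm{BPU}(H)) \cong \pi_{k-1}(\mathrm{PU}(H))$ for every $k \geq 1$ (with the standard connecting map). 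This is the classical statement that $\mathrm{BPU}(H)$ is the classifying space and that its homotopy is a degree-shift of the homotopy of the structure group.

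Next I would substitute the homotopy groups of $\mathrm{PU}(H)$ obtained in Proposition~\ref{projectiveunitary}: the only non-trivial one is $\pi_2(\mathrm{PU}(H)) = \mathbb{Z}$, with all other homotopy groups trivial. Under the degree-shift isomorphism this gives $\pi_3(\mathrm{BPU}(H)) \cong \pi_2(\mathrm{PU}(H)) = \mathbb{Z}$ and $\pi_k(\mathrm{BPU}(H)) = \{1\}$ for all $k \neq 3$. A space whose homotopy is concentrated in a single degree $n$ with value $A$ is by definition an Eilenberg--MacLane space $\mathrm{K}(A,n)$, so $\mathrm{BPU}(H)$ is a $\mathrm{K}(\mathbb{Z},3)$, as claimed.

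I do not expect a genuine obstacle here, since the argument is purely formal once Proposition~\ref{projectiveunitary} is in hand. The only point requiring minor care is ensuring that the universal bundle really does have a contractible total space and that its projection is a (Serre or Hurewicz) fibration so that the long exact sequence applies; both are guaranteed by the chosen model of the universal bundle (cf.\ \cite{Milnor:1956}, \cite[Theorem~4.11.2]{Husemoller:1993}). One should also note in passing that $\mathrm{BPU}(H)$ is path-connected, so that $\pi_0(\mathrm{BPU}(H))$ is trivial and the Eilenberg--MacLane characterization is unambiguous; this follows from $\pi_1(\mathrm{BPU}(H)) \cong \pi_0(\mathrm{PU}(H)) = \{1\}$ together with connectedness of the contractible total space.
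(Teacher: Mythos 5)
Your proof is correct and follows essentially the same route as the paper: the long exact homotopy sequence of the universal bundle $\mathrm{PU}(H) \to \mathrm{EPU}(H) \to \mathrm{BPU}(H)$ with contractible total space, combined with the homotopy groups of $\mathrm{PU}(H)$ from Proposition~\ref{projectiveunitary}. If anything, you are slightly more explicit than the paper, which only displays the degree-$3$ portion of the sequence and leaves the vanishing of the remaining homotopy groups (and path-connectedness) implicit.
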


\begin{proof}
The locally trivial fiber bundle $\mathrm{PU}(H) \to \mathrm{EPU}(H) \to \mathrm{BPU}(H)$ induces the homotopy exact sequence $$\{ 1 \} = \pi_3(\mathrm{EPU}(H)) \to \pi_3(\mathrm{BPU}(H)) \to \pi_2(\mathrm{PU}(H)) \to \pi_2(\mathrm{EPU}(H)) = \{ 1 \},$$ and the claim follows from Proposition~\ref{projectiveunitary}.
\end{proof}

\begin{corollary}
Let $G(E_n)$ be an algebraically simply-connected semisimple split real Kac--Moody group of type $E_n$, $n \geq 8$, endowed with the Kac--Peterson topology $\tau$, let $K(E_n)$ be the subgroup consisting of the fixed-points of the Cartan--Chevalley involution of $G(E_n)$ endowed with the subspace topology, and let $\mathrm{Spin}(E_n) \to K(E_n)$ be the universal covering map from \cite{Harring/Koehl:2023}. Then the isomorphism classes of principal $\mathrm{PU}(H)$-bundles over $\mathrm{Spin}(E_n)$ are in one-to-one correspondence to the elements of $H^3(\mathrm{Spin}(E_n), \mathbb{Z})$ (the so-called characteristic classes).
\end{corollary}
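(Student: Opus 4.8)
The plan is to combine the identification of $\mathrm{BPU}(H)$ as an Eilenberg--MacLane space $\mathrm{K}(\mathbb{Z},3)$ from the preceding corollary with the two representability theorems of algebraic topology: the classification of principal bundles by homotopy classes of maps into a classifying space, and the representability of singular cohomology by Eilenberg--MacLane spaces. The statement then falls out as the composition of two natural bijections.

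First I would invoke the Milnor classification theorem \cite{Milnor:1956} (see also \cite[Theorem~4.11.2]{Husemoller:1993}): for a paracompact base space $X$, pulling back the universal bundle along a classifying map $f \colon X \to \mathrm{BPU}(H)$ induces a bijection between the set of homotopy classes $[X, \mathrm{BPU}(H)]$ and the set of isomorphism classes of principal $\mathrm{PU}(H)$-bundles over $X$. To apply this with $X = \mathrm{Spin}(E_n)$ I must check that $\mathrm{Spin}(E_n)$ is paracompact; this holds because $\mathrm{Spin}(E_n)$ carries a derivate of the Kac--Peterson topology and is therefore $k_\omega$, and $k_\omega$-spaces are paracompact by \cite[(6), p.\ 114]{Franklin/Thomas:1977}, exactly as already exploited in Proposition~\ref{hurewicz}.

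Next, by the preceding corollary $\mathrm{BPU}(H)$ is an Eilenberg--MacLane space of type $\mathrm{K}(\mathbb{Z},3)$. The Eilenberg--MacLane representability theorem then supplies a natural bijection $[\mathrm{Spin}(E_n), \mathrm{K}(\mathbb{Z},3)] \cong H^3(\mathrm{Spin}(E_n), \mathbb{Z})$, realized by pulling back the fundamental class in $H^3(\mathrm{K}(\mathbb{Z},3),\mathbb{Z})$. This theorem requires the source to have the homotopy type of a CW complex, which is guaranteed by Corollary~\ref{CW}, where $\mathrm{Spin}$ is shown to admit a CW decomposition. Composing the two bijections yields the asserted one-to-one correspondence, with the associated class in $H^3(\mathrm{Spin}(E_n),\mathbb{Z})$ recording the characteristic class of the bundle.

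The only genuine points to verify are the topological hypotheses feeding the two representability results — paracompactness for the bundle classification and CW homotopy type for the Eilenberg--MacLane identification — and both are already available from the preparatory results of this note (the $k_\omega$-property via \cite{Franklin/Thomas:1977} and Corollary~\ref{CW}). Beyond this, the argument is the formal concatenation of two standard natural bijections; I expect no substantive obstacle other than confirming that the single model of $\mathrm{Spin}(E_n)$ at hand simultaneously satisfies both hypotheses, which it does.
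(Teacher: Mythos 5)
Your proposal is correct and follows essentially the same route as the paper's own proof: the classification theorem for principal bundles over a paracompact base combined with Eilenberg--MacLane representability of $H^3(-,\mathbb{Z})$, with paracompactness supplied by the $k_\omega$-property (Spin being a finite cover of the $k_\omega$-group $K$) and the CW hypothesis supplied by Corollary~\ref{CW}. The only difference is bibliographic (you cite Milnor/Husemoller where the paper cites tom Dieck), which is immaterial.
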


\begin{proof}
This follows from the classification theorem of principal bundles (see \cite[Theorem~14.4.1]{tomDieck:2008}, also \cite[Theorem~4.13.1]{Husemoller:1993}) combined with the interplay between Eilenberg--MacLane spaces and cohomology (see \cite[Theorem~17.5.1]{tomDieck:2008}). Note here that $\mathrm{Spin}(E_n)$ is paracompact (finite covers of $k_\omega$-spaces are $k_\omega$) and admits a CW decomposition by Corollary~\ref{CW}; moreover, $\mathrm{BPU}(H)$ is an Eilenberg--Maclane space $\mathrm{K}(\mathbb{Z},3)$ by the preceding corollary.
\end{proof}

\begin{proof}[Proof of Theorem~\ref{string}.]
We follow \cite[Proof of Theorem~5.1]{Stolz:1996}. Consider the principal $\mathrm{PU}(H)$-bundle $P \to \mathrm{Spin}(E_n)$ corresponding to a generator of $H^3(\mathrm{Spin}(E_n), \mathbb{Z}) \cong \mathbb{Z}$. Moreover, let $\mathrm{Aut}(P)$ be the group of $\mathrm{PU}(H)$-equivariant homeomorphisms $P \to P$. Each equivariant homeomorphism $f$ of $P$ induces a homeomorphism $\tilde f$ of the base space $\mathrm{Spin}(E_n)$, yielding a continuous group homomorphism $\pi : \mathrm{Aut}(P) \to \mathrm{Homeo}(\mathrm{Spin}(E_n))$.

Define $$\mathrm{String}(E_n) := \{ f \in \mathrm{Aut}(P) : \pi(f) \in  \mathrm{Spin}(E_n) \subset \mathrm{Homeo}(\mathrm{Spin}(E_n)) \},$$ considering the embedding $\mathrm{Spin}(E_n) \subset \mathrm{Homeo}(\mathrm{Spin}(E_n))$ given by right multiplication $g \mapsto \{ x \mapsto xg \}$.

This yields the gauge bundle $$\{ 1 \} \to \mathrm{Gauge}(P) \to \mathrm{String}(E_n) \to \mathrm{Spin}(E_n)  \to \{ 1 \},$$ killing $\pi_3$ by \cite[Lemma~5.6]{Stolz:1996}.
\end{proof}


\section{Whitehead towers}

\begin{theorem}
Let $G$ be an algebraically simply-connected semisimple split real Kac--Moody group $G$ of irreducible simply-laced type distinct from $A_1$ endowed with the Kac--Peterson topology $\tau$, and let $K$ be the subgroup consisting of the fixed-points of the Cartan--Chevalley involution of $G$ endowed with the subspace topology.

Then $K$ is connected with respect to $\tau$ and admits a Whitehead tower, and one can in fact choose $$\cdots \to \Spin \to K,$$ where $\Spin$ is the two-fold spin cover constructed in \cite{Ghatei/Horn/Koehl/Weiss:2017}. If $G$ is of type $E_n$, $n \geq 8$, then one can moreover choose the Whitehead tower as $$\cdots \to \mathrm{String} \to \Spin \to K.$$ 
\end{theorem}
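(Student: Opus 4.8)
The plan is to assemble the three ingredients already prepared above: the countability of the homotopy groups of $K$ (first theorem of Section~3), Rovelli's existence theorem for Whitehead towers of topological groups with countable homotopy (\cite[Theorem~2.13]{Rovelli:2019}), and the explicit models $\Spin$ and $\mathrm{String}$ whose homotopy-theoretic roles were pinned down in the earlier sections. The organising principle is that an $m$-connected cover of a topological group is determined up to isomorphism by its homotopy type; hence, whenever one exhibits a topological group that is an appropriately connected cover of a given stage and carries a continuous homomorphism to it, one may insert that group into the tower and continue building above it.

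First I would settle connectedness. By \cite[Proposition~4.9]{Harring/Koehl:2023} the group $K$ carries a topologically simply connected two-fold cover $\Spin \to K$; since $\Spin$ is simply connected it is in particular connected, and connectedness of $K$ follows because the covering map is a continuous surjection. The same statement identifies $\Spin$ with the universal cover of $K$ and forces $\pi_1(K) \cong C_2$, a connected double cover with simply connected total space being the universal covering, whose number of sheets equals the order of the fundamental group of the base. Writing $K\langle m\rangle$ for the $m$-connected cover, connectedness gives $K\langle 0\rangle = K$, and the preceding remark identifies $K\langle 1\rangle = \Spin$, since killing $\pi_1(K) \cong C_2$ is exactly passage to the two-fold universal cover. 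As covering maps are isomorphisms on $\pi_k$ for $k \geq 2$, the homotopy groups of $\Spin$ coincide with those of $K$ in all degrees $\geq 2$ and are countable by the first theorem of Section~3. Rovelli's theorem then furnishes a Whitehead tower $\cdots \to \Spin$ of topological groups, and prepending the covering homomorphism $\Spin \to K$ produces the tower $\cdots \to \Spin \to K$ asserted in general.

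For $G$ of type $E_n$ with $n \geq 8$ I would locate $\mathrm{String}$ among the stages above $\Spin$. The second theorem of Section~3 gives $\pi_2(\Spin(E_n)) = \{1\}$, $\pi_3(\Spin(E_n)) = \mathbb{Z}$ and $\pi_4(\Spin(E_n)) = \pi_5(\Spin(E_n)) = \pi_6(\Spin(E_n)) = \{1\}$, so $\Spin(E_n)$ is already $2$-connected and the next non-trivial stage kills $\pi_3 \cong \mathbb{Z}$. Theorem~\ref{string} supplies a topological group $\mathrm{String}(E_n)$ with vanishing homotopy through dimension $6$ and homotopy identical to $\Spin(E_n)$ beyond, together with a quotient map $\mathrm{String}(E_n) \to \Spin(E_n)$. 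Since $\pi_4,\pi_5,\pi_6$ of $\Spin(E_n)$ already vanish, this map is an isomorphism on $\pi_k$ for every $k \geq 4$ and annihilates $\pi_3$; thus $\mathrm{String}(E_n)$ simultaneously realises $K\langle 3\rangle, \dots, K\langle 6\rangle$. Applying Rovelli's theorem once more to $\mathrm{String}(E_n)$, whose homotopy is again countable, and stacking the resulting tower on top of $\mathrm{String}(E_n) \to \Spin(E_n) \to K(E_n)$ yields the refinement $\cdots \to \mathrm{String} \to \Spin \to K$.

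The main obstacle is not the abstract existence of the tower, which Rovelli hands us, but the compatibility of these hand-made models with it: one must verify that $\Spin$ and $\mathrm{String}(E_n)$, built respectively as a Kac--Moody spin cover and as an automorphism group of a projective-unitary bundle, are genuine topological groups sitting in continuous homomorphisms that induce the claimed maps on homotopy, so that uniqueness of connected covers permits substituting them for Rovelli's anonymous stages. For $\Spin$ this is essentially \cite{Ghatei/Horn/Koehl/Weiss:2017} together with \cite[Proposition~4.9]{Harring/Koehl:2023}; for $\mathrm{String}(E_n)$ it rests on Theorem~\ref{string} and the gauge bundle constructed there. The one point requiring genuine care is that the niceness hypotheses underlying Rovelli's construction, guaranteed for $K$ by paracompactness and the CW structure of Corollary~\ref{CW}, persist for the higher covers $\Spin$ and $\mathrm{String}(E_n)$; since connected coverings and the gauge construction preserve the $k_\omega$ and CW framework, this should hold, but it is the step I would check most carefully.
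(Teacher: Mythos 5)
Your proposal is correct and follows essentially the same route as the paper: both rest on the countability theorem of Section~3, Rovelli's \cite[Proposition~1.14]{Rovelli:2019} and \cite[Theorem~2.13]{Rovelli:2019}, the universality of $\Spin \to K$ from \cite[Proposition~4.9]{Harring/Koehl:2023}, and Theorem~\ref{string} in the $E_n$ case. The only difference is bookkeeping---you apply Rovelli's theorem to $\Spin$ (respectively $\mathrm{String}$) and stack the resulting tower on top of the explicit low stages, whereas the paper's terse proof applies it to $K$ and lets the universality of the spin cover place $\Spin$ in the tower---and you usefully make explicit the connectedness argument and the homotopy-group verifications that the paper leaves implicit.
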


\begin{proof}
By \cite[Proposition~4.9]{Harring/Koehl:2023} the two-fold cover $$\Spin \to K$$ is universal with respect to $\tau$. Hence by 
\cite[Proposition~1.14]{Rovelli:2019} and \cite[Theorem~2.13]{Rovelli:2019} it suffices to prove that $K$ has countable homotopy groups. In case of type $E_n$, additionally Theorem~\ref{string} applies.
\end{proof}

\bigskip \noindent
Author's address

\medskip \noindent
Ralf K\"ohl \\
Christian-Albrechts-Universit\"at zu Kiel \\ Mathematisches Seminar \\ Heinrich-Hecht-Platz 6 \\ 24118 Kiel \\ Germany \\
{\tt koehl@math.uni-kiel.de}

\end{document}